\documentclass[11pt, oneside]{article} 
\usepackage{geometry} 
\geometry{letterpaper}  
\usepackage{graphicx}	
\usepackage{amsmath}
\usepackage{amsthm}
\usepackage{amssymb}
\usepackage{color}
\usepackage{todonotes}
\usepackage[mathscr]{euscript}

\usepackage{comment}

\newcommand{\norm}[1]{\left\lVert#1\right\rVert}
\newcommand{\innerpr}[1]{\langle\,#1\,\rangle}
\newcommand{\Int}{\text{Int}}
\newcommand{\Id}{\text{Id}}

\newcommand{\mat}[1]{\begin{pmatrix} #1 \end{pmatrix}}
\newcommand{\D}{\mathcal{D}}
\newcommand{\M}{\mathscr{M}}
\newcommand{\R}{\mathbb{R}}
\newcommand{\Z}{\mathbb{Z}}
\newcommand{\ds}{\displaystyle}

\newtheorem{theorem}{Theorem}

\newtheorem{corollary}[theorem]{Corollary}

\newtheorem*{maintheorem}{Main Theorem}
\newtheorem*{definition*}{Definition}

\title{A new proof of the existence of embedded surfaces with Anosov geodesic flow}
\author{Victor Donnay\thanks{Department of Mathematics, Bryn Mawr College, Bryn Mawr, PA, USA,  vdonnay@brynmawr.edu}, Daniel Visscher\thanks{Department of Mathematics, Ithaca College, Ithaca, NY, USA, dvisscher@ithaca.edu. \newline \indent The second author was supported by a Summer Research Grant from Ithaca College.}}
\date{}

\begin{document}
\maketitle

\begin{abstract}
We give a new proof of the existence of compact surfaces embedded in $\R^3$ with Anosov geodesic flows. This proof starts with a non-compact model surface whose geodesic flow is shown to be Anosov using a uniformly strictly invariant cone condition.  Using a sequence of explicit maps based on the standard torus embedding, we produce compact embedded surfaces that can be seen as small perturbations of the Anosov model system and hence are themselves Anosov. 
\end{abstract}

The geodesic flow on a surface of negative Gaussian curvature serves as a prototype of strongly chaotic deterministic dynamical systems. In the 1960's, Anosov abstracted the dynamical property of uniform hyperbolicity in such systems~\cite{Anosov}, starting a fruitful and still active area of research in smooth dynamical systems. Uniformly hyperbolic (``Anosov'') systems on compact manifolds are known to be stably ergodic, mixing, and to have exponential decay of correlations (see~\cite{Dolgopyat,Liverani} and references therein).

The dynamics of a geodesic flow is determined by the metric that generates it. Riemannian metrics can be determined intrinsically (as a smoothly varying inner product on the tangent space) or extrinsically (as the restriction of a metric in an ambient space that the surface is embedded in). A physically intuitive way to get a metric on a surface is to embed it in $\R^3$, defining the metric extrinsically as the $\R^3$ Euclidean metric restricted to the surface. We will call surfaces with such metrics \emph{isometrically embedded in $\R^3$}.

The geometric property that generates the uniform hyperbolicity of Anosov's examples is negative curvature. Such surfaces, however, if they are compact, cannot be isometrically embedded in $\R^3$, since they would necessarily contain points of positive curvature (for instance, the points of tangency with the smallest bounding sphere). This raises a natural question, originally asked by Michael Herman: do there exist compact surfaces isometrically embedded in $\R^3$ with Anosov geodesic flows? 

Herman's question was answered affirmatively in~\cite{Donnay-Pugh-embeddedsurfaces} by Donnay and Pugh, who gave a process for increasing the genus of a particular embedded surface in a way that adds more areas of negative curvature while decreasing the value of the positive curvature.
Eventually, this process must yield an Anosov geodesic flow on a surface of large (but unspecified) genus. The genus of a surface is a simple way to quantify its topological complexity; it also gives an obstruction to a surface generally supporting an Anosov geodesic flow. Klingenberg showed in~\cite{Klingenberg} that any metric with conjugate points cannot have an Anosov geodesic flow (see also the proof by Ma\~n\'e~\cite{Mane}). Thus, the sphere and the torus cannot support a metric with an Anosov geodesic flow, regardless of whether the metric comes from an isometric embedding. This is the extent of the topological obstruction in general though: all surfaces of genus $g\geq 2$ support metrics with Anosov geodesic flow, the simplest examples being metrics of constant negative curvature. In other words, all surfaces with at least a little topological complexity ($g\geq 2$) support an Anosov geodesic flow.  A refinement of Herman's question, then, is: how topologically complex must a surface be in order to support an isometrically embedded Anosov geodesic flow?\footnote{Kourganoff shows that any orientable surface of genus $\geq 11$ can be isometrically embedded in $\mathbb{S}^3$ so that it has an Anosov geodesic flow~\cite{Kourganoff}. His methods do not work for isometric embeddings into Euclidean $\R^3$.}

In this paper, we give an alternate proof of the following theorem with a view toward quantifying our techniques.

\begin{maintheorem}[\cite{Donnay-Pugh-embeddedsurfaces}]\label{mainthm:exist}
There exist compact  embedded surfaces in $\mathbb{R}^3$ for which the geodesic flows are Anosov. 
\end{maintheorem}

Our proof uses a global argument, rather than the local patches of~\cite{Donnay-Pugh-embeddedsurfaces}, with a simple explicit embedding map based on the standard torus embedding. In a sequel paper, we quantify these procedures to give a non-optimal lower bound on the genus for which a surface supports an isometrically embedded Anosov geodesic flow. Preliminary estimates put the genus at roughly $10^{9}$.

Heuristically, we relate the geometry and dynamics of the Main Theorem as follows. Negative curvature causes families of geodesics to diverge from each other, a property that generates hyperbolicity. Once diverging, a family of geodesics will remain diverging in non-positive curvature. Such a family will also remain diverging through a region of small positive curvature as long as the positive curvature does not ``outweigh'' the negative curvature. These properties are all demonstrated by simple Jacobi field computations. In this scheme, one can attempt to construct a surface with Anosov geodesic flow by arranging the geometry so that each geodesic encounters ``more'' negative curvature than positive curvature.

The total amount of curvature on a surface $S$ is related to its genus via the Gauss-Bonnet Theorem:
    $$
    \int_S\, K ~dA = 2 - 2 g.
    $$ 
This means that the larger the genus of the surface, the more negative curvature we have at our disposal to generate hyperbolicity. The conjugate point obstruction, however, shows that a lot of negative curvature is not enough to produce an Anosov geodesic flow---one must also consider the effects of any positive curvature on the surface. An isometrically embedded surface necessarily has regions of positive curvature. If there is too much positive curvature in any one region, that region will produce a conjugate point and thereby the geodesic flow will not be Anosov regardless of how much negative curvature there is elsewhere. Thus, the positive curvature must be carefully interspersed with regions of negative curvature, with the negative curvature preventing the development of conjugate points. This motivates a geometric version of the refinement of Herman's question above: how can negative and positive curvature be arranged on an embedded surface so that the resulting geodesic flow is Anosov? And how efficiently can one do this---i.e., what is the smallest amount of negative curvature (or the smallest genus) that can be used?

In~\cite{Donnay-Pugh-embeddedsurfaces}, Donnay and Pugh give a proof of this theorem by triangulating a round sphere and placing a large number of dispersing tubes along the boundary of each triangle, modifying their construction in~\cite{Donnay-Pugh-finitehorizon}. In this paper, we attach a small number of dispersing tubes to a flat torus, and then repeat this pattern of tubes in a periodic fashion to create a non-compact model space 
with a model metric that comes from the Euclidean $\R^3$ metric. The geodesic flow on this surface is easily seen to be Anosov (Theorem~\ref{thm:modelanosov}). It is well known that the set of metrics with Anosov geodesic flows is $C^2$-open for compact surfaces. For our non-compact model surface, we use a uniformly strictly invariant cone condition to show that there is such an open set around our model metric (Corollary~\ref{cor:perturbations}). We then consider a sequence of covering maps from the model surface to compact embedded surfaces in $\R^3$ (Theorem~\ref{thm:periodic}). This sequence yields metrics 
that converge to the model space metric (Theorem~\ref{thm:converge}), and thus at some point the sequence must enter the open set of Anosov metrics around the model metric.

One can, in principle, estimate the genus of the surface constructed here by quantifying the components of the proof: determining how large the open set of Anosov metrics around the model metric is (i.e., how strongly Anosov the model geodesic flow is) and how close elements of the sequence of converging metrics are to the model metric. 
We undertake this analysis in a sequel to this paper.

\section{Anosov geodesic flows}
Let $M$ be a surface with Riemannian metric $g$, and let $SM$ denote the sphere bundle\footnote{I.e., $SM = TM/\sim$, where $(p,v) \sim (q,w)$ iff $p=q$ and $v = c w$ for some $c \not= 0$. This is naturally identified with the unit tangent bundle, but since the notion of ``unit'' depends on a metric and we will consider different metrics on the same manifold, we will use $SM$ to mean the sphere bundle.}. We will write elements of $SM$ as $x = (p,v)$, with $p \in M$ and $v$ a unit vector in $T_p M$. The \emph{geodesic flow} of $(M,g)$ is the flow $\varphi^t: SM \to SM$ defined by 
    \[
    \varphi^t(x) = \varphi^t(p,v) = (\gamma_{(p,v)}(t), \dot{\gamma}_{(p,v)}(t)),
    \]
where $\gamma_{(p,v)}$ is the unique geodesic on $(M,g)$ with $\gamma_{(p,v)}(0)=p$ and $\dot{\gamma}_{(p,v)}(0)=v$.
The geodesic flow $\varphi^t$ on $SM$ is \emph{Anosov} if there is a flow-invariant splitting $T(SM) = \langle \dot{\varphi}^t \rangle \oplus E^s \oplus E^u$ and constants $C,\lambda > 1$ such that $\langle \dot{\varphi}^t \rangle$ is the direction of the flow and
    \begin{equation}\label{eqn:Anosovdefn}
    \norm{D_x \varphi^t |_{E^s(x)}} < C\lambda^{-t}
~~~~\text{and}~~~~
    \norm{D_x \varphi^t |_{E^u(x)}} > C^{-1} \lambda^t
    \end{equation}
for $t \geq 0$ and all $x \in SM$, with the norm determined by the Sasaki metric.  Note that, since $\varphi^t$ is invertible, these conditions are equivalent to $\norm{D_x \varphi^{-t} |_{E^s(x)}} > C^{-1} \lambda^t$ and $\norm{D_x \varphi^{-t} |_{E^u(x)}} < C\lambda^{-t}$, respectively.

By the definition above, $E^u$ and $E^s$ must be non-trivial and each of the subspaces $\langle \dot{\varphi}^t \rangle (x)$, $E^s(x)$, and $E^u(x)$ over the point $x \in SM$ must be one-dimensional. The geodesic flow preserves length along $\dot{\varphi}^t$, so the expansion and contraction conditions on $E^s(x)$ and $E^u(x)$ imply that they must be perpendicular to $\langle \dot{\varphi}^t \rangle (x)$. Since the geodesic flow is volume preserving\footnote{The Sasaki metric on $SM$ gives a volume measure which is the product of the area measure on $M$ induced by the metric $g$ and Lebesgue measure on the $\mathbb{S}^1$ fibers.}, it must also preserve area on the perpendicular subspace $\langle \dot{\varphi}^t \rangle^\perp$. 
In summary, an Anosov splitting decomposes each $\langle \dot{\varphi}^t \rangle^\perp$ above a point $x \in SM$ into two one-dimensional subspaces in such a way that the decomposition is invariant under $D\varphi^t$ and the subspaces have the expansion/contraction properties above.

There is another natural geometric splitting of the perpendicular subspace $\langle \dot{\varphi}^t \rangle^\perp = H \oplus V$ that comes along with the metric $g$. The subspace $V$ is called the \emph{vertical subspace} and consists of the vectors at $x=(p,v)$ that are derived from varying the vector $v$ (while keeping $p$ fixed). The subspace $H$ is called the \emph{horizontal subspace} and consists of the vectors at $x=(p,v)$ that are derived from varying the basepoint $p$ on the manifold perpendicular to the flow direction (while keeping $v$ fixed). Note that, in order to keep the vector $v$ ``fixed'' while varying $p$, one needs a notion of parallel transport from a metric, so the subspace $H$ depends on the metric used. Both $H$ and $V$ are one-dimensional over the point $x$, and $T(SM) = \langle \dot{\varphi}^t \rangle \oplus H \oplus V$. This, however, cannot be an Anosov splitting: while the subspace $H$ may be invariant under $D\varphi^t$ (if the metric is flat), the vertical subspace $V$ is never invariant under a geodesic flow. 

The horizontal and vertical subspaces give rise to a set of coordinates, which, via Jacobi fields,  provide a natural lens through which to view the dynamics of the geodesic flow. In particular, this aids in showing the invariance of a cone field under the geodesic flow. 
For each $x \in SM$, let $\xi_h (x) \in H$ and $\xi_v (x) \in V$ be unit vectors such that $(\dot{\varphi}^t, \xi_h, \xi_v)$ is in the standard orientation. Using these vectors as a basis for $T_x(SM)$, any $\omega \in T_x(SM)$ can be written as $\omega = c_1 \dot{\varphi}^t + c_2 \xi_h + c_3 \xi_v$; its norm with respect to the Sasaki metric then satisfies
$\norm{\omega}^2 = c_1^2 + c_2^2 + c_3^2$. 

A vector $\xi$ in the perpendicular subspace of $T_x(SM)$ evolves according to the Jacobi equation in the following way. Let $\xi = j(0)\xi_h + j'(0)\xi_v$ be identified with the initial conditions of a solution to the Jacobi equation. Then
$$D_x\varphi^t \xi = j(t) \xi_h + j'(t) \xi_v$$
where $j(t)$ satisfies the Jacobi equation $j(t)''+K(\gamma_x(t))j(t)=0$.

The cone criterion of Theorem~\ref{thm:cone} below provides a way to produce an Anosov splitting.
Given vectors $X$ and $Y$ in $\langle \dot{\varphi}^t \rangle^\perp$ based at $x$ with $\norm{X}$ and $\norm{Y}> 0$, define a \emph{cone} to be the set
    \[
    C_{X,Y}(x) = \{ \alpha X + \beta Y ~|~ \alpha\beta \geq 0 \}.
    \]
Define the angle $\theta$ of the cone $C_{X,Y}(x)$ to be the angle between two vectors that determine it: 
    \[
    \theta(C_{X,Y}(x)) = \arccos \left( \frac{\innerpr{X,Y}}{\norm{X}\norm{Y}} \right).
    \]
Observe that scaling $X$ or $Y$ by a positive constant does not change either the cone or the angle, so we will henceforth assume that $X$ and $Y$ are unit vectors. 
A cone field on $SM$ is a collection of cones $C_{X,Y}(x)$ defined for every point $x \in SM$.
\begin{definition*}
A cone field $C_{X,Y}$ on $SM$ is
\begin{enumerate}
\item[a.] \emph{$\tau$-invariant} if
    $
    D\varphi^\tau (C_{X,Y}(x)) \subseteq C_{X,Y}(\varphi^\tau x)
    $
for all $x \in SM$, and it is 
\item[b.] \emph{$\tau$-strictly invariant}  if   
    $
    D\varphi^\tau (C_{X,Y}(x)) \setminus \{(0,0)\} \subset \Int\left(C_{X,Y}(\varphi^\tau x)\right)
    $
for all $x \in SM$.
\end{enumerate}

\noindent For a $\tau$-strictly invariant cone field, define
$\Delta \theta: SM \to \R$  by 
    \[
    \Delta \theta (x) = \frac{\theta(D\varphi^\tau (C_{X,Y}( \varphi^{-\tau}x)))}{\theta(C_{X,Y}(x))}.
    \]
Note that the assumption $D\varphi^\tau (C_{X,Y}(\varphi^{-\tau}x)) \subset \Int \left(C_{X,Y}(x)\right)$ implies that $\Delta \theta < 1$. 
A $\tau$-strictly invariant cone field is
\begin{enumerate}
\item[c.] \emph{uniformly $\tau$-strictly invariant} if there exists a $c<1$ such that $\Delta \theta(x) \leq c$ for all $x \in SM$.
\end{enumerate}
\end{definition*}
\noindent A \emph{continuous cone field} $C_{X,Y}$ is a continuously varying set of cones above each point $x \in SM$ (e.g., coming from continuous vector fields $X$ and $Y$ on $SM$). 
Note that, for a fixed continuous cone field, we can regard $\theta$ as a continuous function on $SM$.

The following cone criterion is similar to those used in other dynamical settings (see, e.g.,~\cite{Katok-Hasselblatt,Kourganoff-unifhyp}). 

\begin{theorem}\label{thm:cone}
Let $M$ be a surface with bounded curvature and let $C_{X,Y}$ be a uniformly $\tau$-strictly invariant cone field on $SM$.
Then the geodesic flow $\varphi^t$ is Anosov.
\end{theorem}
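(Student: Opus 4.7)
The plan is to extract a $D\varphi^t$-invariant hyperbolic splitting $T(SM) = \langle \dot{\varphi}^t \rangle \oplus E^s \oplus E^u$ from the cone field, along the lines of a Wojtkowski-type argument. First, I would define the candidate unstable distribution as the nested-cone intersection
$$E^u(x) := \bigcap_{n \geq 0} D\varphi^{n\tau}\bigl(C_{X,Y}(\varphi^{-n\tau}x)\bigr),$$
a decreasing family of closed sub-cones of $C_{X,Y}(x)$ inside $\langle \dot{\varphi}^t \rangle^\perp$. Uniform $\tau$-strictness forces the angles of these cones to decay geometrically---most cleanly seen by viewing each $D\varphi^\tau$ as a Birkhoff contraction of the Hilbert projective metric on $C_{X,Y}$ with contraction factor depending only on $c$---so $E^u(x)$ is a single line. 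For the stable distribution, observe that area preservation of $D\varphi^t$ on $\langle\dot{\varphi}^t\rangle^\perp$ (noted earlier in the excerpt) together with strict invariance of $C_{X,Y}$ under $D\varphi^\tau$ forces the complementary cone field $C_{X,-Y}$ to be uniformly $\tau$-strictly invariant under $D\varphi^{-\tau}$: in the 2-dimensional area-preserving picture on $\langle\dot{\varphi}^t\rangle^\perp$, squeezing one of the two complementary sectors forward in time corresponds to squeezing the other backward. Setting
$$E^s(x) := \bigcap_{n \geq 0} D\varphi^{-n\tau}\bigl(C_{X,-Y}(\varphi^{n\tau}x)\bigr)$$
yields a single line transverse to $E^u(x)$. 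The invariance $D\varphi^\tau E^u = E^u$ and $D\varphi^\tau E^s = E^s$ is built into the definitions, and continuity of the splitting comes from continuity of the cone field.

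For the hyperbolic rate estimates of \eqref{eqn:Anosovdefn}, fix unit vectors $v \in E^u(x)$ and $w \in E^s(x)$, and let $\alpha_n$ denote the angle between $E^u(\varphi^{n\tau}x)$ and $E^s(\varphi^{n\tau}x)$. Because $E^u$ and $E^s$ lie in the open interiors of the complementary cones $C_{X,\pm Y}$ and the cone field is continuous on $SM$, $\alpha_n$ stays in a compact subset of $(0,\pi)$ uniformly in $n$ and $x$. Area preservation of $D\varphi^{n\tau}$ on $\langle\dot{\varphi}^t\rangle^\perp$ gives
$$\norm{D\varphi^{n\tau}v}\cdot\norm{D\varphi^{n\tau}w}\cdot\sin\alpha_n = \sin\alpha_0,$$
so forward-expansion on $E^u$ is the reciprocal of forward-expansion on $E^s$ up to bounded factors. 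Coupling this identity with the cone-shrinkage rate $c$---concretely via a Wojtkowski-type quadratic form $Q(\alpha X + \beta Y) := \alpha\beta$ satisfying $Q(D\varphi^\tau \xi) \geq \lambda^2 Q(\xi)$ for $\xi$ in the cone interior and some $\lambda = \lambda(c) > 1$---yields uniform exponential expansion $\norm{D\varphi^{n\tau}v} \geq C^{-1}\lambda^n$ and correspondingly contraction $\norm{D\varphi^{n\tau}w} \leq C\lambda^{-n}$. Bounded curvature finally bridges discrete and continuous time: the Jacobi equation $j'' + Kj = 0$ with bounded $K$ makes $D\varphi^t$ uniformly bounded on $\langle\dot{\varphi}^t\rangle^\perp$ for $t \in [0,\tau]$, so the estimates extend to all $t \geq 0$ with slightly adjusted constants.

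The main obstacle I expect is the conversion of angle contraction into vector expansion: uniform $\tau$-strict invariance controls directly only the projective shrinkage of the cone, not the norm of individual vectors. Bridging the two requires the interplay of area preservation, the uniform transversality of $E^u$ and $E^s$, and a Wojtkowski/Hilbert-metric Lyapunov argument. Uniformity across $SM$ (which need not be compact) also relies crucially on the bounded-curvature hypothesis, both to control $D\varphi^t$ over time intervals of length $\tau$ and to keep auxiliary quantities like the Sasaki-metric norms and cone angles uniformly bounded.
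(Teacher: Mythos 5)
Your proposal has the same skeleton as the paper's proof: define $E^u$ and $E^s$ as nested forward and backward cone intersections, use the area-preserving property of $D\varphi^t$ on $\langle\dot\varphi^t\rangle^\perp$ to turn angle-shrinkage into norm-growth, and use bounded curvature to pass from the time-$\tau$ map to the full flow. The one-dimensionality argument is equivalent in substance: you invoke Birkhoff contraction of the Hilbert projective metric, while the paper observes directly that $\Delta\theta^n(x) \leq c^n$ and applies the Nested Interval Theorem; these are two phrasings of the same geometric fact.

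Where you diverge, and where a genuine gap appears, is in the expansion estimate. The paper gets expansion by a concrete parallelogram argument applied to the evolving cone edges $X^n = D\varphi^{n\tau}X$, $Y^n = D\varphi^{n\tau}Y$: area preservation gives $\norm{X^n}\norm{\alpha Y^n}\sin\theta^n = \text{const}$, and since $\theta^n \leq c^n\pi \to 0$ uniformly, $\max\{\norm{X^n},\norm{\alpha Y^n}\}$ must grow; then, because $X^n$ and $\alpha Y^n$ come into near-alignment inside the shrinking cone, $\norm{X^n+\alpha Y^n} > \max\{\norm{X^n},\norm{\alpha Y^n}\}$, giving uniform expansion. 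You instead invoke a Wojtkowski quadratic form $Q(\alpha X + \beta Y) = \alpha\beta$ and assert $Q(D\varphi^\tau\xi) \geq \lambda^2 Q(\xi)$ for some $\lambda(c) > 1$. But this Lyapunov inequality is not a hypothesis here; the hypothesis (uniform $\tau$-strict invariance, $\Delta\theta \leq c < 1$) is a purely projective angle-contraction condition. Deriving the $Q$-expansion from the angle contraction plus area preservation is precisely the nontrivial content the paper proves directly, and your proposal does not supply that derivation. Note also that your stated inequality $Q(D\varphi^\tau\xi) \geq \lambda^2 Q(\xi)$ is vacuous on the cone boundary, where $Q(\xi) = 0$, and that the $Q$-coordinates $(\alpha,\beta)$ are not Sasaki-orthonormal unless the cone angle is $\pi/2$, so relating $Q$-growth to Sasaki-norm growth requires a uniform lower bound on the cone angle --- another quantity that must be extracted, not assumed. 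Your instinct that "the main obstacle is the conversion of angle contraction into vector expansion" is exactly right; the paper's explicit area-and-alignment computation is what fills that hole, and your appeal to the Wojtkowski machinery names the destination without building the bridge.
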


\begin{proof}
Let 
    \[
    E^u(x) = \bigcap_{n \geq0} D\varphi^{n\tau} \left( C_{X,Y}( \varphi^{-n\tau}x) \right),
    \]
which is non-empty since $C_{X,Y}$ is $\tau$-invariant. Set 
    \[
    \Delta \theta^n (x) = \frac{\theta(D\varphi^{n\tau} (C_{X,Y}( \varphi^{-n\tau}x)))}{\theta(C_{X,Y}(x))}.
    \]
Then 
    \[
    \Delta \theta^n (x) = \Delta \theta (\varphi^{(-n+1)\tau}x) \cdots \Delta \theta (\varphi^{-\tau}x) \cdot \Delta \theta (x)  \leq c^n,
    \]
so that $\lim_{n\to \infty} \Delta \theta^n = 0$. Hence, by the Nested Interval Theorem, $E^u(x)$ is a one-dimensional subspace of $\langle \dot{\varphi}^t \rangle^\perp (x)$.

For the stable direction $E^s$, we apply the backwards flow to the complement cone field
$C^c_{X,Y} = \{ \alpha X + \beta Y ~|~ \alpha\beta \leq 0 \}$. The uniform $\tau$-strict invariance property of $C_{X,Y}$ implies the same of $C^c_{X,Y}$.
Then the arguments above show that
    \[
    E^s(x) = \bigcap_{n\geq 0} D\varphi^{-n\tau} \left( C^c_{X,Y}(\varphi^{n\tau}x) \right)
    \]
is a one-dimensional subspace of $\langle \dot{\varphi}^t \rangle^\perp$.

Now we show that the volume preserving property of $D\varphi^t$ along with the uniform contraction of the width of the cones force uniform expansion in the $E^u$ direction. The subspace $E^u(x)$ is spanned by the vector $\xi^u = X+\alpha Y$ for some $\alpha$, with $\alpha$ uniformly bounded away from 0 for all $x \in SM$. Let $X^n = D\varphi^{n\tau} X$, let $Y^n = D\varphi^{n\tau} Y$, and let $\theta^n (x) = \theta(D\varphi^{n\tau}(C_{X,Y}(x))) = \theta(C_{X^n,Y^n}(\varphi^{n\tau}x))$. Note that $\theta^n \to 0$ uniformly for $x \in SM$, since the cone field is uniformly $\tau$-strictly invariant and 
    \[
    \theta^n(x)= \Delta \theta^n (\varphi^{n\tau} x)\cdot \theta  (C_{X,Y}(\varphi^{n \tau} x)) < c^n \theta  (C_{X,Y}(\varphi^{n \tau} x)) < c^n \pi
    \]
for the constant $c<1$ from above.

The area preserving property of $D\varphi^t$ on $\langle \dot{\varphi}^t \rangle^\perp$ means that 
\begin{equation*}\label{eqn:areapres}
\norm{X}\norm{\alpha Y}\sin \theta = \norm{X^n}\norm{\alpha Y^n}\sin \theta^n.
\end{equation*}
Fix a $k >1$. Since $\sin \theta^n \to 0$, either $\norm{X^n}$ or $\norm{\alpha Y^n}$ is eventually greater than $k\norm{X+\alpha Y}$. Using the fact that $\theta^n$, the angle between $X^n$ and $\alpha Y^n$, goes to $0$, we get that for large enough values of $n$,
    \begin{equation}
    \norm{D\varphi^{n\tau} \xi^u} = \norm{X^n + \alpha Y^n} > \max \left\{ \norm{X^n},\norm{\alpha Y^n} \right\} > k\norm{X+\alpha Y} = k\norm{\xi^u}. \label{eqn:expansion}
    \end{equation}
Uniform $\tau$-strict invariance implies that all of the above estimates are uniform for all $x \in SM$. Hence, (\ref{eqn:expansion}) implies uniform expansion for the time $\tau$ map. The boundedness of the curvature on $M$ implies that $\norm{D_x\varphi^t}$ is bounded uniformly for all $t$ between $0$ and $\tau$ and for all $x \in SM$. Thus, uniform expansion for the time $\tau$ map implies the uniform expansion condition~(\ref{eqn:Anosovdefn}) for the flow. A similar argument holds for the uniform contraction of vectors in the stable subspace. 

The above argument produces a one-dimensional $E^u(x)$ satisfying the uniform expansion condition~(\ref{eqn:Anosovdefn}) for every $x \in SM$. The bundle $E^u$ is flow invariant since, due to the area preserving property of the flow on the perpendicular subspace, there can only be one direction in $\langle \dot{\varphi}^t \rangle^\perp$ that contracts uniformly in backwards time. The stable bundle $E^s$ is similarly flow invariant.
\end{proof}

\section{Model space}

Let $\D \subset \R^2$ be a collection of closed disks with $\Z^2$ translational symmetry with the property that every geodesic that starts outside a disk will eventually enter a disk (for example, see Figure~\ref{fig:disks}). Then $\D$ satisifes the following   finite horizon property. 

\begin{figure}[ht]
\begin{center}
\includegraphics[width=2.5in]{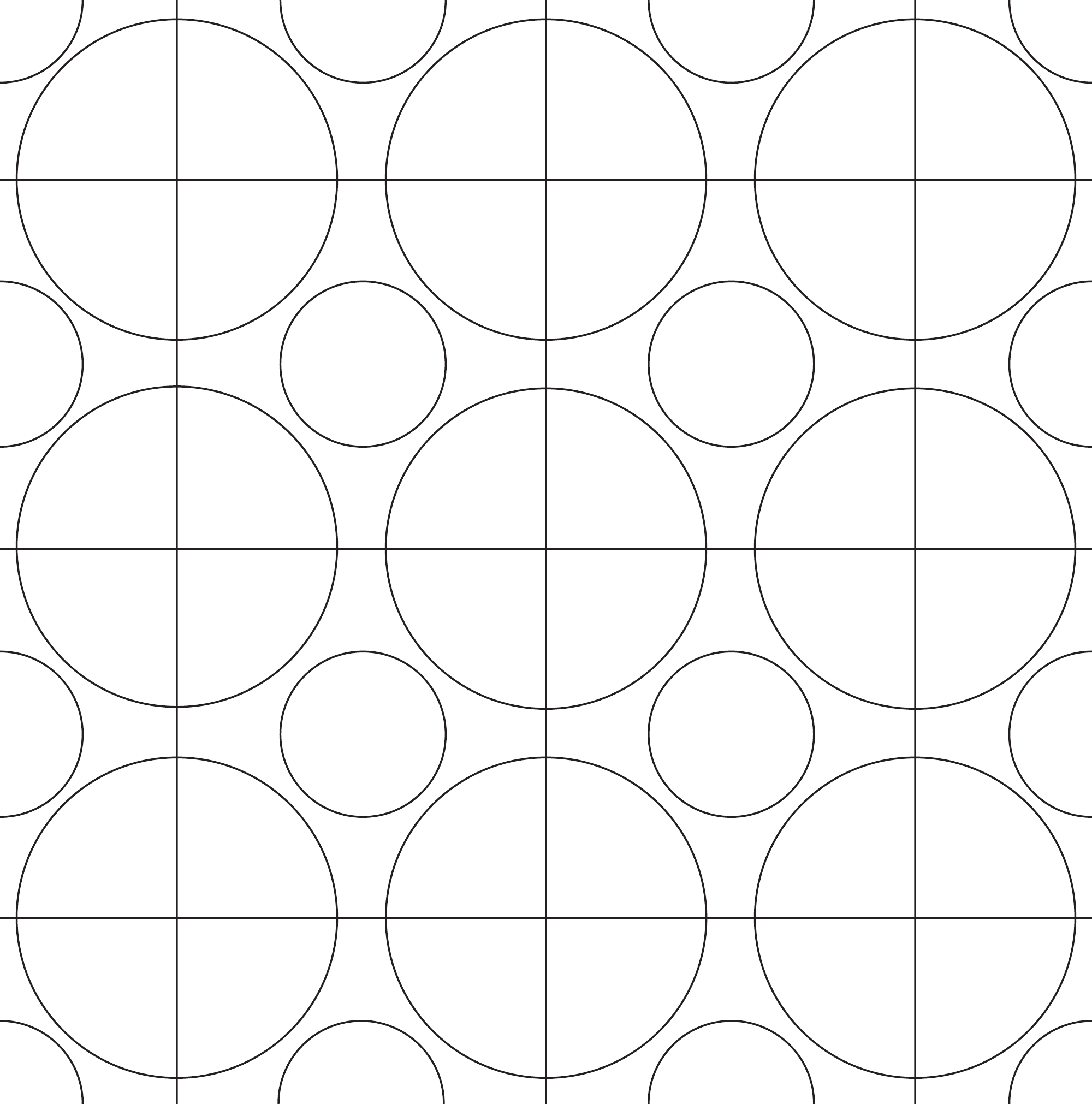}
\end{center}
\caption{A set $\D$ of disks on the plane with $\Z^2$ translational symmetry and the finite horizon property.}\label{fig:disks}
\end{figure}

\begin{definition*}
A set $S \subset (M,g)$ has the \emph{finite horizon property} if there exists $T>0$ such that for all $x \in SM$, there is a time $t$ with $0<t<T$ with $\gamma_{x}(t) \subset \Int \, (S)$.
\end{definition*}

We construct a surface $\M \subset \R^3$ based on $\D$. Denote by $(u, v, w)$ the standard Euclidean coordinates on $\R^3$, and tile the planes $w=0$ and $w=1$ with copies of $\D$. 
Smoothly attach negatively curved surfaces of revolution of height 1 along these disks (one may need a few different such tubes for different sized disks in $\D$) in the $w=0$ and $w=1$ planes, as in Figure~\ref{fig:modelspace}. The resulting surface $\M$ is $C^\infty$-smooth and non-compact with infinite genus. Since it is embedded in $\R^3$, $\M$ has an extrinsically defined metric $g^\M_0$ that is the restriction of the Euclidean metric $g_0$ on $\R^3$. In this metric, the curvature is strictly negative on the interior of the surfaces of revolution and zero outside of the disks, hence non-positive everywhere.
\begin{figure}[ht] 
\begin{center}
\includegraphics[width=5in]{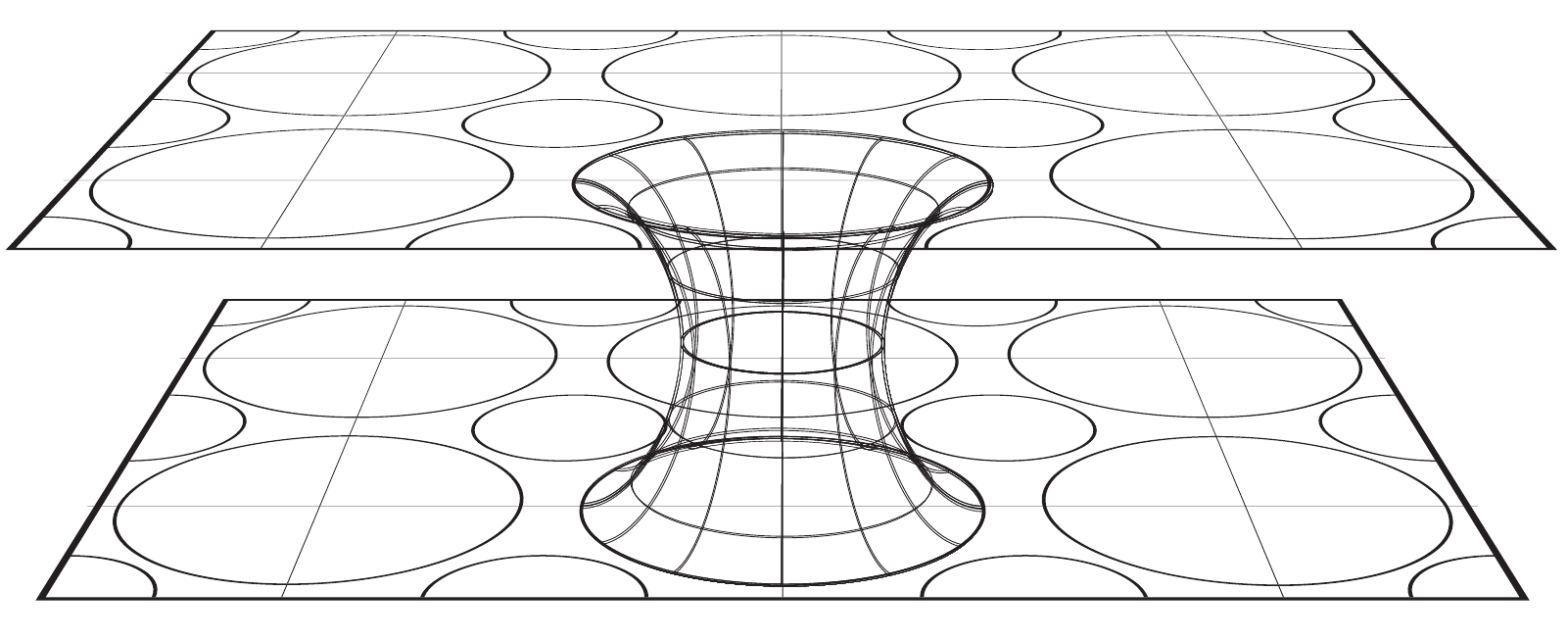}
\end{center}
\caption{A negatively curved tube connecting the $w=0$ and $w=1$ planes in the model space $\M$.} \label{fig:modelspace}
\end{figure}

\begin{theorem}\label{thm:modelanosov}
The geodesic flow on $(\M,g^\M_0)$ has a uniformly $\tau$-strictly invariant cone field and thus is Anosov.
\end{theorem}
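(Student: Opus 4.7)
The plan is to use Theorem~\ref{thm:cone} by constructing a continuous cone field on $S\M$ and verifying that it is uniformly $\tau$-strictly invariant. The natural choice is the cone field given in horizontal/vertical coordinates by
    \[
    C(x) = \{\, a\, \xi_h(x) + b\, \xi_v(x) : ab \geq 0 \,\},
    \]
i.e., the closed first and third quadrants in the $(j, j')$-plane of Jacobi field coordinates. Since $\M$ has smooth periodic geometry with finitely many tube types, the curvature is uniformly bounded, so Theorem~\ref{thm:cone} applies as soon as this cone field is shown to be uniformly $\tau$-strictly invariant for some $\tau > 0$.

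First I would establish $t$-invariance of $C$ for every $t \geq 0$. A vector in the first quadrant of $C(x)$ corresponds to initial Jacobi data $(j(0), j'(0))$ with both entries nonnegative. Passing to the Riccati equation $u' = -u^2 - K$ for $u = j'/j$, and using $K \leq 0$ everywhere on $(\M, g^\M_0)$, one sees that if $u(0) \geq 0$ then $u(t) \geq 0$ for all $t \geq 0$: in flat regions $u$ simply decreases toward $0$ and stays nonnegative, while along a tube $u' = |K| - u^2$ is strictly positive whenever $u = 0$, pushing any $u = 0$ ray into $\{u > 0\}$.

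Next I would choose $\tau$ and show strict invariance. By the finite horizon property of the disk pattern $\D$, every geodesic segment of length $T$ enters the interior of some tube. I would take $\tau$ slightly larger than $T$, so that every $\tau$-segment passes through a compact region of some tube where $K \leq -K_0 < 0$ for at least a uniform time $\delta > 0$. In the Riccati picture, this means the lower boundary $u = 0$ of $C$ gets mapped after time $\tau$ to some value $u(\tau) \geq u_{\min} > 0$, while the upper boundary ray $\{j = 0,\, j' > 0\}$ (i.e.\ $u = +\infty$) is sent to a finite $u$-value, since in any subsequent flat segment $u$ decreases toward $0$. Consequently, $D\varphi^\tau(C(\varphi^{-\tau}x))$ sits strictly in the interior of $C(x)$ for every $x \in S\M$.

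Finally, for the uniformity of $\Delta \theta$, I would appeal to the $\Z^2$-symmetry of the model: the time spent in regions of curvature $\leq -K_0$ within any $\tau$-segment, together with $K_0$ itself, can be taken uniform across all geodesics, giving uniform bounds on $u(\tau)$ for rays in $C$ and hence a constant $c < 1$ with $\Delta \theta(x) \leq c$ for all $x \in S\M$. I expect the main obstacle to be securing the uniform positive lower bound $\delta$ on the time a geodesic spends in the strongly negatively curved part of a tube, since a geodesic could in principle graze a tube near its boundary where $|K|$ is small. I plan to handle this by exploiting the dispersing geometry of each tube of revolution, so that a geodesic entering a disk is forced to traverse a fixed compact subregion of definite negative curvature for a definite amount of time, made uniform by the finiteness of tube types and compactness modulo the $\Z^2$-action. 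Applying Theorem~\ref{thm:cone} then yields that the geodesic flow on $(\M, g^\M_0)$ is Anosov.
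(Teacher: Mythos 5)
Your overall strategy matches the paper's: the same cone field $C_{\xi_h,\xi_v}$ (closed first and third quadrants in Jacobi coordinates), invariance from $K\leq 0$, strict invariance triggered by the finite horizon property, and appeal to Theorem~\ref{thm:cone}. Passing to the Riccati equation $u'=-u^2-K$ for $u=j'/j$ is just a reparametrization of the Jacobi computation the paper carries out directly; both say the same thing about the cone edges moving strictly inward once negative curvature is encountered.

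Where the proposal has a genuine gap is in the plan for establishing uniformity. You want to secure constants $K_0>0$, $\delta>0$ such that every $\tau$-segment spends time at least $\delta$ in the region $\{K\leq -K_0\}$, and you propose to get this from the dispersing geometry of the tubes. But the claim that ``a geodesic entering a disk is forced to traverse a fixed compact subregion of definite negative curvature for a definite amount of time'' is false: a geodesic whose planar projection barely clips a disk stays near the annulus where the tube meets the flat plane, where $K$ is arbitrarily close to $0$, and such a geodesic need not enter any fixed $\{K\leq -K_0\}$ region at all within time $\tau$. So there are no uniform $K_0,\delta$ of the kind you are after, and a quantitative lower bound on $u(\tau)$ along the bottom edge cannot be obtained that way. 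The paper sidesteps this entirely with a soft argument: the cone strictly contracts after time $\tau$ at every point (since every orbit meets strictly negative curvature), the function $\Delta\theta$ is continuous, and $g_0^\M$ descends to the compact quotient $\M/(\Z^2\times\Id)$; a continuous function strictly less than $1$ on a compact space is bounded away from $1$, giving the uniform $c<1$ without any explicit curvature or dwell-time estimate. You gesture at this with ``compactness modulo the $\Z^2$-action,'' but you should make that the argument itself rather than an ingredient in a quantitative bound that does not hold. With that replacement, the rest of your proof is sound and coincides with the paper's.
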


\begin{proof}
We take as our cone field $C_{\xi_h,\xi_v}$ whose edges are generated by the standard basis vectors $\xi_h$ and $\xi_v$ on $\langle \dot{\varphi}^t \rangle^\perp$.
In order to show uniform $\tau$-strict invariance of these cones, we study the associated Jacobi fields, denoted by $j_h$ and $j_v$. With this notation, 
    \[
    \xi_h^t = D\varphi^t \xi_h = (j_h(t), j_h'(t)) 
    \]
and
    \[
    \xi_v^t = D\varphi^t \xi_v = (j_v(t), j_v'(t)), 
    \]
written in $\xi_h$-$\xi_v$ coordinates.

First, consider the evolution of the top edge of the cone $\xi_v^t = (j_v(t), j_v'(t))$, where $\xi_v^0 = (0,1)$ is in the  vertical subspace $V$. For $K\leq 0$ and non-negative values of $j$, the Jacobi equation shows $j'' = -Kj \geq 0$. This means $j_v'(t)\geq 1$ for $t \geq 0$ and $j_v(t)>0$ for $t>0$, so that $\xi_v^t$ lies strictly inside the cone field $C_{\xi_h,\xi_v}$ for $t>0$. Thus, the vertical vector is mapped strictly inside the first quadrant for $K \leq 0$. 

Next, consider the evolution of the bottom edge of the cone $\xi_h^t = (j_h(t), j_h'(t))$, where $\xi_h^0 = (1,0)$ is in the horizontal subspace $H$. For $K=0$, the Jacobi equation is $j'' = -Kj = 0$, so that $j'$ is constant and $j$ remains positive. For $K<0$ and positive values of $j$, the Jacobi equation is $j'' = -Kj > 0$, so that $j'$ is increasing. This means $j_h'(t)$ becomes positive and $j_h(t)$ remains positive, so that $\xi_h^t$ lies strictly inside the cone field $C_{\xi_h,\xi_v}$. Thus, the horizontal vector stays horizontal for $K=0$ and is mapped strictly inside the first quadrant once it encounters curvature $K<0$. 

Hence, the cone field $C_{\xi_h,\xi_v}$ is invariant since $K\leq0$, and it is strictly invariant as soon as every point in $S\M$ encounters some negative curvature under the flow. 

Let $x \in S\M$. Since $\D$ has the finite horizon property, there is a $\tau>0$ such that for all $x \in S\M$, the orbit $\varphi^tx$ spends some time in negative curvature between $\varphi^0 x = x$ and $\varphi^\tau x$. Hence, for every point $x \in S\M$, we have
    \[
    D\varphi^\tau (C_{\xi_h,\xi_v}(x)) \subset \Int\left(C_{\xi_h,\xi_v}(\varphi^\tau x)\right).
    \]
Although the space $S\M$ is non-compact, the metric $g^\M_0$ can be seen as the lift of a metric on a compact quotient $\M/(\Z^2 \times \Id)$. This compactness, together with the continuity of the cone field, guarantees the uniformity of the cone field's $\tau$-strict invariance.  Thus, by Theorem~\ref{thm:cone} $\varphi^t$ is Anosov.
\end{proof}

We note that our construction of $\M$ provides a way to produce Anosov geodesic flows on compact but non-embedded surfaces. Namely, take the model space $\M$ and mod out by the $\Z^2$ action of translations on $\R^2$ that preserve the set $\D$. For the set $\D$ pictured in Figure~\ref{fig:disks}, $\D/\Z^2$ gives a set of two closed disks on the torus $\R^2/\Z^2$.  Thus, for any subgroup $\Omega = a\Z \times b\Z$ with $a,b \in \mathbb{N}$, the set $\D/\Omega$ is a collection of $2ab$ disks on the torus $\R^2/\Omega$. Then $\M / (\Omega \times \Id)$ is a compact surface of genus $2ab+1$, and it has an Anosov geodesic flow induced by the metric $g^\M_0$. Note that the smallest possible genus for a surface produced in this way is $3$.

For compact manifolds, it is well known that the set of metrics with Anosov geodesic flows is $C^2$-open. In our setting of a non-compact manifold, $C^2$-openness of the Anosov property follows because the uniformly $\tau$-strictly invariant condition for continuous cone fields is an open property.

\begin{corollary}\label{cor:perturbations}
For all metrics $g^\M$ that are sufficiently $C^2$-close to $g_0^\M$ in the uniform norm, the geodesic flow on $(\M, g^\M)$ is Anosov.
\end{corollary}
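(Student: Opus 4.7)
The plan is to treat the uniform $\tau$-strict invariance established in Theorem~\ref{thm:modelanosov} as an open condition in the $C^2$ topology on metrics. Theorem~\ref{thm:modelanosov} furnishes a time $\tau>0$, a continuous cone field $C_{\xi_h,\xi_v}$ on $S\M$, and a constant $c<1$ with $\Delta \theta_0(x) \leq c$ uniformly in $x \in S\M$ under the geodesic flow $\varphi_0^t$ of $g_0^\M$. I would show that for any $g^\M$ sufficiently $C^2$-close to $g_0^\M$, the analogous cone field $C_{\xi_h^{g^\M},\xi_v^{g^\M}}$ built from the horizontal/vertical splitting of $g^\M$ is uniformly $\tau$-strictly invariant under the perturbed flow $\varphi^t_{g^\M}$, with some constant $c'<1$. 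Boundedness of Gaussian curvature is automatic from $C^2$-closeness, so Theorem~\ref{thm:cone} then yields the Anosov property for $(\M,g^\M)$.

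The technical heart is continuous dependence of geodesic flow data on the metric. The splitting $\xi_h^g,\xi_v^g$ depends on $g$ only through its Christoffel symbols, hence through first derivatives of $g$; the time-$\tau$ map $\varphi^\tau_g$ and its derivative on $\langle \dot\varphi^t \rangle^\perp$ are governed by the geodesic and Jacobi equations, whose coefficients involve up to second derivatives of $g$. Standard ODE dependence on parameters then shows that for every $\epsilon>0$ there exists $\delta>0$ such that $\|g^\M - g_0^\M\|_{C^2}<\delta$ implies pointwise $\epsilon$-closeness of $\xi_h^{g^\M},\xi_v^{g^\M}$ and of $D\varphi^\tau_{g^\M}|_{\langle \dot\varphi^t\rangle^\perp}$ to their model counterparts. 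Since $\Delta \theta_{g^\M}(x)$, computed in the $g^\M$-Sasaki metric, depends continuously on these data, the uniform bound $\Delta \theta_0 \leq c<1$ then forces $\Delta \theta_{g^\M}(x) \leq (1+c)/2 < 1$ for every $x \in S\M$, provided $\delta$ is small enough.

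The main obstacle is ensuring that all of these continuity statements are genuinely uniform on the non-compact space $S\M$, given that $g^\M$ need not share the $\Z^2$ symmetry of $g_0^\M$. The key is that $g_0^\M$ itself is the lift of a smooth metric on the compact quotient $\M/(\Z^2 \times \Id)$, so Christoffel symbols, curvatures, and all ODE coefficients that enter the Jacobi equation are \emph{a priori} uniformly bounded for $g_0^\M$. Combined with a globally small uniform $C^2$ perturbation, these bounds remain uniform for $g^\M$, and the ODE closeness estimates carry through uniformly in the basepoint $x \in S\M$. This upgrades the pointwise open cone condition to an open $C^2$-neighborhood of $g_0^\M$ in the uniform norm, completing the application of Theorem~\ref{thm:cone}.
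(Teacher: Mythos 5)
Your proposal is correct and takes essentially the same approach as the paper: view uniform $\tau$-strict cone invariance as a $C^2$-open condition on metrics and conclude via Theorem~\ref{thm:cone}. The paper's own proof is much terser---it simply asserts that $C^2$-closeness of metrics implies $C^0$-closeness of $D\varphi^t$, so the cone field remains uniformly $\tau$-strictly invariant---whereas you fill in the ODE-dependence reasoning (Christoffel symbols from first derivatives of $g$, Jacobi coefficients from second derivatives) and explicitly address the uniformity issue on the non-compact $\M$ via the compact quotient, both of which the paper leaves implicit.
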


\begin{proof}
Let $g$ be a metric on $\M$ that is $C^2$-close to $g^\M_0$ in the uniform norm. This implies that the maps $D\varphi^t_g$ and $D\varphi^t_{g^\M_0}$ are $C^0$-close, so that the cone field $C_{\xi_h,\xi_v}$ in the $g$ metric is also uniformly $\tau$-strictly invariant. So by Theorem~\ref{thm:cone}, the geodesic flow generated by $g$ is also Anosov.
\end{proof}

\section{A sequence of embedded surfaces}

We now will show that contained in the set $\mathcal{U}$ of Anosov metrics described above, there are metrics arising from embedded surfaces determined by the following mapping. 

Given two functions $R_1(s)$ and $R_2(s)$, define a one-parameter family of maps $X_s : \R^3 \to \R^3$ to be
    \[
    X_s(u, v, w) = (X_1(u, v, w, s), X_2(u, v, w, s), X_3(u, v, w, s))
    \]
with 
    \[
    X_1(u, v, w, s) =   \left( R_1(s) +( R_2(s)+w) \cos\left( \frac{v}{R_2(s)}\right) \right) \cos\left( \frac{  u }{R_1( s)}\right)
    \]
    \[
    X_2(u, v, w, s) = \left(R_1(s) +( R_2(s)+w) \cos\left( \frac{  v}{R_2( s)}\right) \right)\sin\left( \frac{  u }{R_1(s)}\right)
    \]
    \[
    X_3(u, v, w, s) =  ( R_2(s)+w) \sin\left( \frac{  v}{R_2(s)}\right) 
    \]
We are interested in the image of $\M$ under this map. Note that for a fixed value of $s$, the images of planes $w= \text{constant}$ are nested tori.

To understand this mapping, note that when $R_1 > R_2 > 0$, one has the standard torus embedding of the plane $w=0$ with the radii given by $R_1$ and $R_2$. The image of $\M$ under $X_s$ is the pair of concentric tori jointed by tubes (Figure~\ref{fig:embeddedsurface}). As $s \to \infty$, we will let the radii go to infinity so that the images of the planes $w=0$ and $w=1$ locally become arbitrarily close to flat. Under an additional condition on $R_1$ and $R_2$, the embedded surface produced by $X_s$ converges to a surface isometric to $\M$. 

\begin{figure}[ht]
\begin{center}
\includegraphics[width=3.5in]{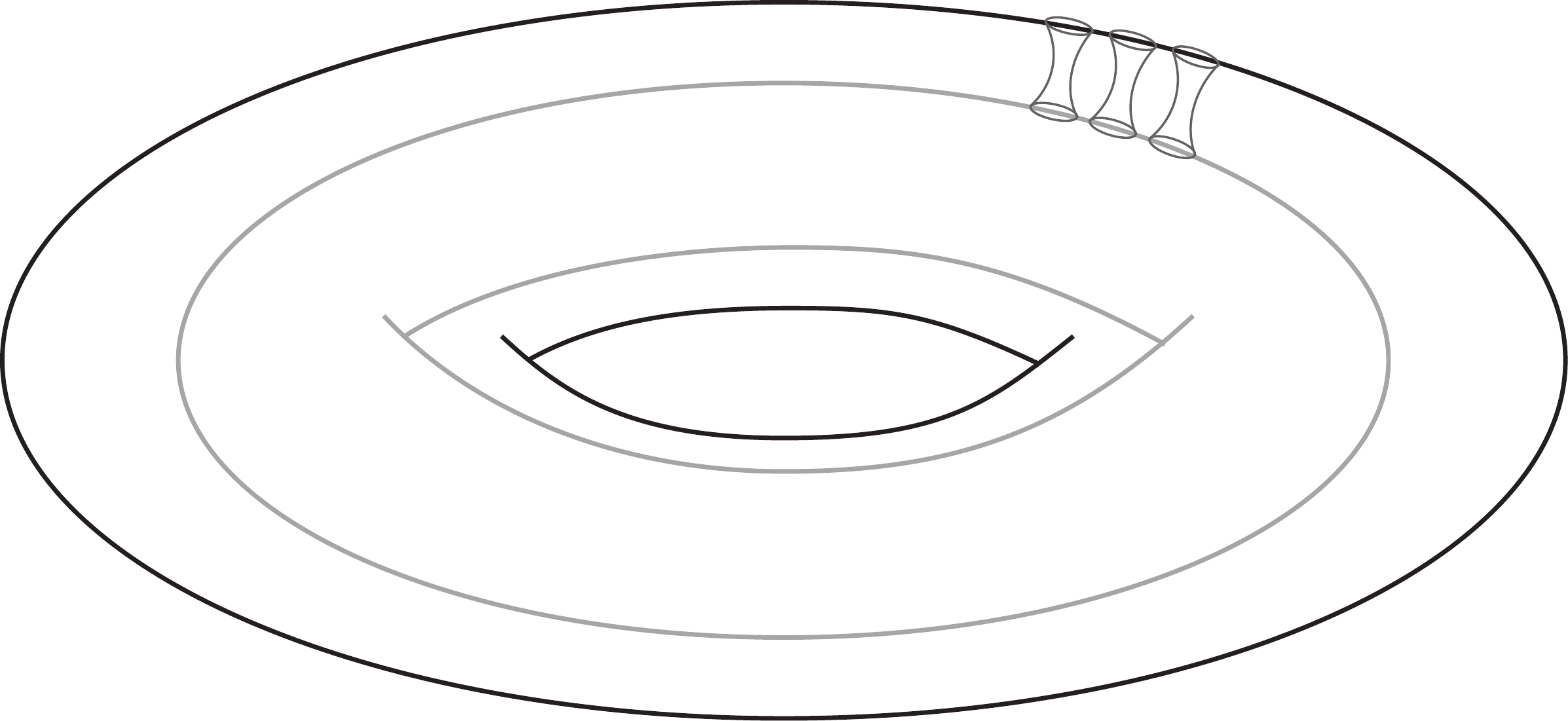}
\end{center}
\caption{An illustrative sketch of the embedded surface $X_s(\M)$ showing a few of the tubes connecting the inner and outer tori.}\label{fig:embeddedsurface}
\end{figure}

For any values of $R_1$ and $R_2$, the map $X_s$ is well-defined on $\R^3$, and the region $[0, 2 \pi R_1(s)] \times [0, 2 \pi R_2(s)] \times \{w_0\} \subset \R^3$ is mapped once around an embedded torus for any $w_0 \in \R$. In particular, the region $[0, 2 \pi R_1(s)] \times [0, 2 \pi R_2(s)] \times [0,1]$ will be mapped to a thickened torus in $\R^3$. However, this map may not be periodic when restricted to $\M$ due to the arrangement of the connecting tubes.

Let $g_s = X_s^* g_0$ be the pullback metric on $\R^3$ by the map $X_s$, and let $g^\M_s = g_s |_\M$ be the restriction of this metric to $\M$.

\begin{theorem} \label{thm:converge}
Assume that as $s\to \infty$, $R_1(s), R_2(s) \to \infty$  and $R_2(s)/R_1(s) \to 0$. Then  $\ds \lim_{s \to \infty} g^\M_s = g^\M_0$ in the $C^\infty$ topology.
\end{theorem}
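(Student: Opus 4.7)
The plan is to compute the pullback metric $g_s = X_s^{\ast} g_0$ explicitly in the ambient Cartesian coordinates $(u,v,w)$, observe that it is diagonal with simple entries, and then bound each entry together with all of its partial derivatives by quantities that decay uniformly on the bounded $w$-slab containing $\M$ under the hypotheses $R_1(s), R_2(s) \to \infty$ and $R_2(s)/R_1(s) \to 0$.

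First I would differentiate the three components of $X_s$ with respect to $u$, $v$, $w$ and compute the Euclidean pairwise inner products of the resulting columns. Writing $\phi = v/R_2$, $\psi = u/R_1$, and $A = R_1 + (R_2+w)\cos\phi$, a direct calculation shows that the three coordinate vector fields $\partial_u X_s$, $\partial_v X_s$, $\partial_w X_s$ are mutually orthogonal with lengths $A/R_1$, $1 + w/R_2$, and $1$, respectively, giving
\[
g_s \;=\; \Bigl(1 + \tfrac{R_2+w}{R_1}\cos(v/R_2)\Bigr)^{2}\, du^2 \;+\; \Bigl(1+\tfrac{w}{R_2}\Bigr)^{2}\, dv^2 \;+\; dw^2.
\]
In particular $g_s - g_0$ is diagonal, and its two nontrivial entries are polynomials in the small quantities $(R_2+w)/R_1$ and $w/R_2$ with coefficients that are bounded trigonometric functions of $v/R_2$.

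Since $\M$ lies in the slab $0 \le w \le 1$, the variable $w$ is uniformly bounded on $\M$. The hypotheses force both prefactors $(R_2+w)/R_1$ and $w/R_2$ to tend to $0$ uniformly in $(u,v,w)$, giving $C^0$ convergence $g_s \to g_0$ uniformly on $\M$. For higher-order derivatives, $\partial_u$ annihilates every term (no explicit $u$-dependence); each $\partial_v$ introduces an extra factor of $1/R_2$ from the chain rule applied to $\cos(v/R_2)$ but preserves the polynomial structure; and each $\partial_w$ contributes at most a factor of $1/R_1$ or $1/R_2$. A short accounting shows that a partial derivative of total order $k$ of the $uu$-entry is bounded by a constant multiple of $(R_2+w)/(R_1 R_2^k) + (R_2+w)^2/(R_1^2 R_2^k)$, which tends to $0$ uniformly once $R_2 \geq 1$, while every derivative of the $vv$-entry is $O(1/R_2)$. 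This yields uniform $C^\infty$ convergence $g_s \to g_0$ on any bounded slab, and restricting tensors from such a slab to the embedded surface $\M$ preserves this convergence, so $g^\M_s \to g^\M_0$ in $C^\infty$.

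The only genuine nuance is verifying that the factors of $1/R_2$ coming from $v$-differentiation are always dominated by the decay supplied by $R_2/R_1 \to 0$. The pairing works because every nontrivial term in $g_s - g_0$ carries a surviving prefactor of either $(R_2+w)/R_1$ or $w/R_2$; after $k$ derivatives in $v$ the worst bound is $O\bigl(1/(R_1 R_2^{k-1})\bigr)$ or $O(1/R_2^{k+1})$, and both hypotheses $R_1 \to \infty$ and $R_2 \to \infty$ force uniform vanishing on $\M$. Once this bookkeeping is set down, the theorem reduces to a routine chain-rule calculation.
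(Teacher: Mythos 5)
Your proposal is correct and follows essentially the same path as the paper's proof: compute the pullback metric $g_s = X_s^* g_0$ explicitly, observe that it is diagonal with entries $\bigl(1 + \tfrac{R_2+w}{R_1}\cos(v/R_2)\bigr)^2$, $\bigl(1+\tfrac{w}{R_2}\bigr)^2$, $1$, and verify that these entries and all their partial derivatives converge uniformly to those of the identity on the slab $0 \le w \le 1$ under the stated hypotheses. Your derivative bookkeeping is somewhat more explicit than the paper's, which merely observes that differentiation annihilates terms or keeps/increases the powers of $R_j$ in the denominator, but the argument is the same.
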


\begin{proof}
This convergence result follows by examining the Jacobian matrix of partial derivatives $DX_s(u, v, w) $ given by 
    \[
    \left(
    \begin{array}{ccc}
     -\frac{\left(R_1(s)+ (R_2(s)+w) \cos \left(\frac{v}{R_2(s)}\right)\right) \sin \left(\frac{u}{R_1(s)}\right)} {R_1(s)}
     & -\frac{(R_2(s)+w) \sin \left(\frac{v}{R_2(s)}\right) \cos \left(\frac{u}{R_1(s)}\right)}{R_2(s)} 
     & \cos \left(\frac{v}{R_2(s)}\right) \cos \left(\frac{u}{R_1(s)}\right) \\
     \frac{\left(R_1(s)+ (R_2(s)+w) \cos \left(\frac{v}{R_2(s)}\right) \right) \cos \left(\frac{u}{R_1(s)}\right)}{R_1(s)} 
     & -\frac{(R_2(s)+w) \sin \left(\frac{v}{R_2(s)}\right) \sin \left(\frac{u}{R_1(s)}\right)}{R_2(s)} 
     & \cos \left(\frac{v}{R_2(s)}\right) \sin \left(\frac{u}{R_1(s)}\right) \\
     0 
     & \frac{(R_2(s)+w) \cos \left(\frac{v}{R_2(s)}\right)}{R_2(s)} 
     & \sin \left(\frac{v}{R_2(s)}\right) \\
    \end{array}
    \right)
    \]

Take two vectors $\zeta$ and $\eta$ in $T_p(\R^3)$  with Euclidean coordinates $(\zeta_1, \zeta_2, \zeta_3)$ and $(\eta_1, \eta_2, \eta_3)$, respectively. Their inner product in the Euclidan metric is
    \[
    \innerpr{\zeta, \eta}_{g_0} = \zeta_1 \eta_1+ \zeta_2 \eta_2 + \zeta_3 \eta_3. 
    \]
Their inner product in the $g_s$ metric, computed using the pullback definition $g_s = X_s^*g_0$, is
    \[
    \innerpr{\zeta, \eta}_{g_s} = \innerpr{DX \zeta, DX \eta}_{g_0} = 
    \left( 1+ \frac{(R_2+w) \cos \left(\frac{v}{R_2}\right)}{R_1} \right)^2 \zeta_1 \eta_1
    + \left( 1 +\frac{ w}{R_2} \right)^2 \zeta_2 \eta_2
    + \zeta_3 \eta_3
    \] 
The metric $g_s$ can be represented as a diagonal matrix $Q_s$ in standard Euclidean coordinates (i.e., so that $g_s(\zeta,\eta) = \zeta^T Q_s \eta$). The matrix $Q_s$ has components

\begin{align*}
Q_{11} &= 1 
        + 2w \cos \left(\frac{v}{R_2(s)}\right) \frac{1}{R_1(s)} \\
    & \qquad
        + 2\cos \left(\frac{v}{R_2(s)}\right) \frac{R_2(s)}{R_1(s)}
        + w^2 \cos ^2\left(\frac{v}{R_2(s)}\right) \frac{1}{R_1(s)^2}\\ 
    & \qquad 
        + 2w \cos^2\left(\frac{v}{R_2(s)}\right) \frac{R_2(s)}{R_1(s)^2}
        + \cos^2\left(\frac{v}{R_2(s)}\right) \frac{R_2(s)^2}{R_1(s)^2},\\
Q_{22} &= 1 + \frac{2w}{R_2(s)} + \frac{w^2}{R_2(s)^2}, \\
Q_{33} &= 1,
\end{align*}
with all other components zero. Under the assumptions that $R_1(s), R_2(s) \to \infty$ and $\frac{R_2(s)}{R_1(s)}\to 0$, we see that the $Q_s$ matrix converges to the identity matrix $Q_0$ as $s \to \infty$. Note that algebraically we need the condition $\frac{R_2(s)}{R_1(s)}\to 0$ because of two of the terms in $Q_{11}$.

Geometrically, one can see why we need this condition by looking at the length of the images of the  $v = \text{constant}$ lines on the torus. Their length varies from $2 \pi (R_1 - R_2) = 2 \pi R_1( 1 - \frac {R_2}{R_1})$ on the inner part of the torus to $2\pi (R_1+R_2) = 2\pi R_1( 1 + \frac {R_2}{R_1})$ on the outer part of the torus. For the metrics $g_s$ to converge to $g_0$ these lengths should all converge to $2 \pi R_1$. 

Higher derivatives of components of $Q_s$ (with respect to $u,v,w$) all converge to 0 as $s \to \infty$, as such derivatives either cause individual terms to vanish or else keep or increase the number of $R_j$  terms in the denominator while keeping the $R_j$ terms in the numerator as is. Hence, $g_s$ converges to $g_0$ in the $C^\infty$ topology. Since $g_s^\M$ and $g_0^\M$ are the restrictions of these metrics to the same surface in $\R^3$, we have $\lim_{s \to \infty} g^\M_s = g^\M_0$.
\end{proof}

Note that for any fixed $(u,v,w)$, $\ds \lim_{s \to \infty} DX_s(u,v,w) = \mat{0&0&1 \\ 1&0&0 \\ 0&1&0}$.
This is a rotation by $\pi/2$ in the $yz$-plane followed by a rotation by $\pi/2$ in the $xy$-plane. One can get some geometric intuition for this by noting that $X_s$ maps the origin to the point $(R_1+R_2, 0, 0)$, taking the $uvw$ frame to a frame obtained by these two rotations. In the limit, the $uvw$ frame at every point in $\R^3$ is related to its image frame in this way.

The pullback metric $g_s$ is always periodic on the slab $\R^2 \times [0,1]$ that contains $\M$. For $g_s$ to be periodic on $\M$, there needs to be a consistency between the fundamental region for the map $X_s$ and the fundamental region for the model space. 

\begin{theorem}\label{thm:periodic}
The metric $g_s$ is periodic on $\M$ with fundamental region $([0, m] \times [0, n] \times [0,1]) \cap \M$ for $m, n \in  \Z $ if and only if 
        $$
        R_1 (s) = \frac{m}{2 \pi}, \qquad R_2 (s) = \frac{n}{2 \pi}.
        $$
\end{theorem}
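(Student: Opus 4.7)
The plan is to reduce the theorem to the observation that the map $X_s$ has fundamental periods $2\pi R_1(s)$ and $2\pi R_2(s)$ in $u$ and $v$, and then to match these with the $\Z^2$-periodicity of $\M$. The first step is to inspect the formulas for $X_1, X_2, X_3$ and note that $u$ and $v$ appear only through $\cos(u/R_1(s))$, $\sin(u/R_1(s))$, $\cos(v/R_2(s))$, and $\sin(v/R_2(s))$. Consequently $X_s(u+T,v,w) = X_s(u,v,w)$ (for all $v,w$) iff $T$ is an integer multiple of $2\pi R_1(s)$, so the minimal $u$-period of $X_s$ is exactly $2\pi R_1(s)$; similarly its minimal $v$-period is $2\pi R_2(s)$. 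The pullback metric $g_s = X_s^* g_0$ inherits these periods on the slab $\R^2 \times [0,1]$.

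For the $(\Leftarrow)$ direction, assume $R_1(s) = m/(2\pi)$ and $R_2(s) = n/(2\pi)$ with $m, n \in \Z$. Then the minimal periods of $X_s$ become precisely $m$ and $n$, and since $\M$ is invariant under integer $u$- and $v$-translations (by the $\Z^2$-symmetry inherited from $\D$), the lattice $m\Z \times n\Z \times \{0\}$ preserves both $\M$ and $X_s$. Hence $g_s|_\M$ is invariant under this action, and $([0,m] \times [0,n] \times [0,1]) \cap \M$ is a fundamental region.

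For the $(\Rightarrow)$ direction, suppose $g_s$ has $([0,m] \times [0,n] \times [0,1]) \cap \M$ as a fundamental region on $\M$, interpreted (as the remark preceding the theorem anticipates) as requiring the fundamental region of $X_s$ restricted to $\M$ to coincide with the given $(m,n)$-region. Then the boundary identifications $u = 0 \leftrightarrow u = m$ and $v = 0 \leftrightarrow v = n$ must be realized by $X_s$, that is, $X_s(0,v,w) = X_s(m,v,w)$ and $X_s(u,0,w) = X_s(u,n,w)$ for all admissible $u,v,w$. By the first step this forces $m$ and $n$ to be positive integer multiples of $2\pi R_1(s)$ and $2\pi R_2(s)$ respectively, and the minimality of the fundamental region (injectivity of $X_s$ on its interior) reduces both multiples to $1$, yielding $R_1(s) = m/(2\pi)$ and $R_2(s) = n/(2\pi)$.

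The main subtle point is interpretational rather than computational: the metric $g_s$ on $\R^3$ is in fact invariant under \emph{any} $u$-translation, thanks to the rotational symmetry of the embedded tori about the $z$-axis, so the condition on $R_1$ does not come from metric invariance alone. The theorem's ``fundamental region'' must therefore encode the consistency between the embedding $X_s$'s own period rectangle $[0,2\pi R_1(s)] \times [0,2\pi R_2(s)]$ and $\M$'s $\Z^2$-lattice, exactly as the remark immediately before the statement makes explicit.
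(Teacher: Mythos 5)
Your proof matches the paper's: both reduce the statement to the fact that $X_s$ has periods $2\pi R_1(s)$ and $2\pi R_2(s)$ in $u$ and $v$, and then match these with the $\Z^2$-lattice of $\M$. Your closing remark identifies a genuine subtlety that the paper's very terse proof glosses over: the paper opens by asserting that for $g_s^\M$ to be periodic the map $X_s$ must be periodic, which taken literally is not quite right, since the components of the pullback metric $Q_s$ depend only on $v$ and $w$, so $g_s$ is invariant under \emph{every} $u$-translation regardless of $R_1(s)$. Your reading --- that ``fundamental region'' must mean a fundamental region for the embedding map $X_s$ restricted to $\M$, so that $X_s$ descends to an embedding of the compact quotient --- is the interpretation actually needed and used in the proof of the Main Theorem, and your argument for the $(\Rightarrow)$ direction under that reading (boundary identifications force $m,n$ to be multiples of the periods; interior injectivity forces the multiple to be $1$) is sound.
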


\begin{proof}
For the metric $g_s^\M = (X_s^* g_0)|_\M$ to be periodic, the map $X_s$ must be periodic on $\D \times [0,1]$, which has $\Z^2$ translational symmetry. The map $X_s$ is periodic in the $u$-direction with period $2 \pi R_1(s)$ and is periodic in the $v$-direction with period $2 \pi R_2(s)$.
Thus we have the conditions
$$
2 \pi R_1 = m, \qquad 2 \pi R_2 = n. 
$$
\end{proof}

\begin{proof}[Proof of Main Theorem]
By Corollary~\ref{cor:perturbations}, there is a $C^2$ open set $\mathcal{U}$ of Anosov metrics containing $g^\M_0$. Let $R_1(s) = \frac{s^2}{2\pi}$ and $R_2(s) = \frac{s}{2\pi}$. By Theorem~\ref{thm:converge}, $g^\M_s \rightarrow g^\M_0$ in the $C^\infty$ topology, so the geodesic flow on $(\M,g^\M_s)$ is Anosov for all sufficiently large $s$. For the sequence $s_n = n$, the metric $g^\M_{s_n}$ is periodic on $\M$ with fundamental region $([0,n^2] \times [0,n] \times [0,1]) \cap \M$ by Theorem~\ref{thm:periodic}. Thus, the metric $g^\M_{s_n}$ decends to a metric on $S_n = \M / (n^2\Z \times n\Z \times \Id)$, which is then isometric to a compact surface embedded in $\R^3$. This surface has $2n^3$ tubes and therefore genus $2n^3+1$.
\end{proof}

Note that other choices of the functions $R_1(s)$ and $R_2(s)$ will also work, but one must be careful to both satisfy the  conditions of Theorem~\ref{thm:converge} and have values of $s$ that satisfy Theorem~\ref{thm:periodic}. For instance, choosing $R_1(s) = \frac{1}{2\pi}s^{\sqrt{2}}$ and $R_2(s) = \frac{1}{2\pi}s$ satisfies the $R_1(s), R_2(s)$ conditions of Theorem~\ref{thm:converge}, but there are no values of $s$ that simultaneously make $2\pi R_1(s)$ and $2\pi R_2(s)$ integers.

In addition to proving that there exist  embedded surfaces in $\mathbb{R}^3$ for which the geodesic flows are Anosov, Donnay and Pugh show that such surfaces exist for all sufficiently large genus~\cite{Donnay-Pugh-finitehorizon}. Our construction gives only  a sequences of geni going to infinity for which there exist embedded surfaces with the desired properties. Modifications to our construction, which we will examine in a subsequent paper, should allow us to replicate the ``for all sufficiently large genus'' result.

\bibliographystyle{abbrv}
\bibliography{references.bib}

\end{document}